\newtheorem{Lem}{Lemma}
\newtheorem{Cor}{Corollary}
\newtheorem{theorem}{Theorem}
\numberwithin{equation}{section}
\begin{document}

\title[An improved upper bound for the argument of $\zeta(\frac{1}{2} +it)$ II]{An improved upper bound for the argument of the Riemann zeta-function on the critical line II}

\author{Timothy Trudgian}
\address{Mathematical Sciences Institute, The Australian National University, ACT 0200, Australia}
\email{timothy.trudgian@anu.edu.au}
\thanks {Supported by ARC Grant DE120100173.}

\subjclass[2010]{Primary 11M06; Secondary 11M26}

\date{February 24, 2011.}

\keywords{Riemann zeta-function, convexity estimate}

\begin{abstract}
This paper concerns the function $S(T)$, the argument of the Riemann zeta-function along the critical line. The main result is that
\begin{equation*}
|S(T)| \leq 0.111\log T + 0.275\log \log T + 2.450,
\end{equation*}
which holds for all $T\geq e$.
\end{abstract}

\maketitle

\section{Summary of Results}
This paper is the sequel to \cite{TrudS} and is related to \cite{TrudDed}; reference will be made frequently to these papers. Write
\begin{equation}\label{opgam}
|S(T)| \leq a \log T + b\log \log T + c, \quad\quad \textrm{for } T\geq T_{0},
\end{equation}
whence the following table provides a brief historical summary.
\begin{table}[h]
\caption{Bounds on $S(T)$ in (\ref{opgam})}
\label{opp}
\centering
\begin{tabular}{l c c c c}
\hline\hline
& $a$ & $b$ & $c$ & $T_{0}$ \\[0.5 ex] \hline
Von Mangoldt \cite{vonMangoldt} 1905 & 0.432 & 1.917 & 12.204 & 28.588\\
Grossmann \cite{Grossmann} 1913 & 0.291 & 1.787 & 6.137 & 50\\
Backlund \cite{Backlund} 1914 & 0.275 & 0.979 & 7.446 & 200\\
Backlund \cite {Backlund1918} 1918 & 0.137 & 0.443 & 4.35 & 200\\
Rosser \cite{Rossers} 1941 & 0.137 & 0.443 & 1.588 & 1467\\
Trudgian \cite{TrudS} 2012 & 0.17 & 0 & 1.998 & $e$\\
Trudgian (Theorem \ref{Onenad}) 2012 & 0.111 & 0.275 & 2.450 & $e$
\\ \hline\hline
\end{tabular}
\end{table}

Note that the result in \cite{TrudS} improves on that in \cite{Rossers} when $25 \leq T \leq 10^{15}$. The purpose of this article is to improve on the result in \cite{Rossers} for all $T$. This is achieved with the following theorem.

\begin{theorem}\label{Onenad}
\begin{equation*}
|S(T)| \leq 0.111 \log T + 0.275 \log\log T + 2.450,
\end{equation*}
for $T \geq e$.
\end{theorem}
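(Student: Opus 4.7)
The plan is to adapt the classical Backlund--Rosser method used in the previous entries of Table~\ref{opp}, with the key improvement coming from the explicit convexity bounds for $\zeta(\sigma+it)$ developed in \cite{TrudDed} and \cite{TrudS}. The starting point is the standard reduction, via Jensen's formula (or Littlewood's lemma), of $|S(T)|$ to an expression involving the average of $\log|\zeta(s)|$ on a disk centered at a point $2+iT$ well to the right of the critical strip, minus a harmless contribution from $\log\zeta(2)$. Choosing the radius $r$ slightly larger than $3/2$ ensures the disk reaches across $\Re s = 1/2$; the portion of the bounding circle in $\Re s \geq 1$ contributes only $O(1)$, because $|\zeta(s)|\leq\zeta(\Re s)$ there.

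First, I would estimate $\log|\zeta|$ on the arc that penetrates the critical strip by inserting an explicit convexity bound of the form $|\zeta(\sigma+it)|\leq C_1 |t|^{\mu(\sigma)}$ for $\tfrac12\leq\sigma\leq 1$, where $\mu$ interpolates linearly (by Phragm\'en--Lindel\"of) between a sharp value at $\sigma=\tfrac12$ imported from \cite{TrudDed} and the trivial $\mu(1)=0$. Integrating the logarithm of this bound around the arc produces a leading term proportional to $\log T$ together with lower-order terms one can track. The Backlund trick of comparing $\zeta$ on two concentric circles of radii $r<R$ then yields a bound of the shape $|S(T)|\leq \bigl(A(r,R)\log T + B(r,R)\bigr)/\log(R/r) + O(1)$.

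Second, I would optimize the free parameters. In \cite{TrudS} the parameters were essentially fixed, which is why that bound had $b=0$ but a larger $a=0.17$; here the extra degree of freedom is to let an auxiliary radius depend mildly on $T$. Solving the resulting transcendental optimization for the balance between $\log(R/r)$ in the denominator and the convexity-driven numerator gives a choice of parameters proportional to a power of $\log T$, which is exactly what produces the $0.275\log\log T$ correction while shrinking the coefficient of $\log T$ to $0.111$.

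Finally, the bound must hold all the way down to $T=e$, not merely for large $T$ as in \cite{Backlund1918,Rossers}. I would extend the range by direct numerical verification against tables of low-lying zeros (following the strategy of \cite{TrudS}) and then glue the numerical and asymptotic regimes together by inflating $c$ just enough. The main obstacle is precisely this bookkeeping: every implicit constant arising in the convexity step and in the optimization must be made explicit, and $c$ forced below $2.450$ while simultaneously maintaining $a=0.111$ and $b=0.275$. Achieving the three target coefficients without sacrificing any one of them is the delicate part of the argument; in particular, the optimization must not be so aggressive as to send $c$ above the threshold, which is the principal tension to manage.
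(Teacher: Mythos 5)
Your overall scaffolding is right---Jensen's formula applied to a disk straddling the critical line, a Phragm\'en--Lindel\"of interpolation across the strip, Backlund's trick, and a numerical patch for small $T$---but the mechanism you describe for producing the $0.275\log\log T$ term is not the one the paper uses, and I don't believe your version would work as stated. You propose to let an auxiliary radius grow like a power of $\log T$ and to harvest $\log\log T$ from the resulting $\log(R/r)$ denominator. The paper instead keeps the center $1+\eta$ and the radius $r(\tfrac12+\eta)$ \emph{fixed} (optimizing at $\eta=0.06$, $r=2.08$); the $\log\log T$ term arises because the explicit pointwise bounds fed into the convexity step carry $\log t$ factors: Backlund's $|\zeta(1+it)|\leq\log t$ on $\sigma=1$ and the Cheng--Graham-type bound $|\zeta(\tfrac12+it)|\leq k_1 t^{1/6}\log t$ on $\sigma=\tfrac12$. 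When you integrate $\log|\zeta|$ of these bounds around the arcs, the $\log t$ factors contribute $\log\log T$. With a $T$-dependent radius, the Jensen denominator $\log(R/r)$ and the contour geometry all shift, and it's not clear you could recover coefficients anywhere near the targets without redoing the entire optimization; nothing in the paper suggests that route is viable.

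A second, smaller issue: you attribute the sharp input at $\sigma=\tfrac12$ to \cite{TrudDed}, but the subconvexity exponent $\tfrac16$ (rather than the convexity exponent $\tfrac14$, which is what underlies Backlund's and Rosser's $a=0.137$) is the decisive improvement, and it comes from Cheng and Graham \cite{Cheng}, sharpened slightly in the paper to $|\zeta(\tfrac12+it)|\leq 2.38\, t^{1/6}\log t$. Without naming that bound explicitly, your outline doesn't explain where the drop from $a=0.137$ to $a=0.111$ comes from. Your Phragm\'en--Lindel\"of framework would need to be extended to handle the $(\log|Q+s|)^{k_3}$ factors (this is exactly the point of the paper's Lemma~\ref{lem1}, which extends Rademacher's theorem to incorporate logarithms), and the contour should be split into pieces (the paper uses five arcs $R_0,\ldots,R_4$) so that the functional equation can be used to bound $\zeta$ to the left of $\sigma=\tfrac12$. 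These are the missing ingredients that turn your sketch into the actual proof.
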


This implies the following result concerning $N(T)$, the number of complex zeroes of $\zeta(s)$ with imaginary parts in $(0, T)$.

\begin{Cor}\label{onlycor}
For $T\geq T_{0}\geq e$
\begin{equation*}
\bigg|N(T) - \frac{T}{2\pi}\log\frac{T}{2\pi e } -\frac{7}{8}\bigg| \leq 0.111 \log T + 0.275 \log\log T + 2.450 + \frac{0.2}{T_{0}}.
\end{equation*}
\end{Cor}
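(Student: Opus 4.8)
The corollary follows from the classical Riemann--von Mangoldt explicit formula, so the plan is to quote that formula and then simply insert the bound from Theorem \ref{Onenad}. Recall that
\begin{equation*}
N(T) = \frac{1}{\pi}\vartheta(T) + 1 + S(T),
\end{equation*}
where $\vartheta(T)$ is the Riemann--Siegel theta function, $\vartheta(T) = \arg\Gamma\!\left(\tfrac14 + \tfrac{it}{2}\right) - \tfrac{T}{2}\log\pi$ evaluated at $t = T$. First I would record the standard consequence of Stirling's formula that
\begin{equation*}
\frac{1}{\pi}\vartheta(T) = \frac{T}{2\pi}\log\frac{T}{2\pi e} - \frac{1}{8} + \Delta(T), \qquad |\Delta(T)| \leq \frac{1}{C T}
\end{equation*}
for an explicit constant; the usual bound coming from the asymptotic expansion of $\log\Gamma$ gives $|\Delta(T)| \le 0.2/T$ for $T$ bounded below by a modest constant (certainly for $T \ge e$). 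Combining the two displayed identities,
\begin{equation*}
N(T) - \frac{T}{2\pi}\log\frac{T}{2\pi e} - \frac{7}{8} = S(T) + \Delta(T),
\end{equation*}
so that the left-hand side is bounded in absolute value by $|S(T)| + |\Delta(T)|$.

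It then remains to apply Theorem \ref{Onenad} to bound $|S(T)|$ and the Stirling estimate to bound $|\Delta(T)|$. Since the statement of the corollary is for $T \ge T_0 \ge e$, and since $\log T$, $\log\log T$ are increasing while $1/T \le 1/T_0$, the bound
\begin{equation*}
|S(T)| + |\Delta(T)| \le 0.111\log T + 0.275\log\log T + 2.450 + \frac{0.2}{T_0}
\end{equation*}
holds for all such $T$, which is exactly the claimed inequality.

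The only point requiring any care is the explicit constant $0.2$ in the error term $\Delta(T)$: one must check that the tail of the Stirling series for $\vartheta(T)$, together with the rounding already absorbed into the main terms $-1/8$ and the $+1$ (which combine with the $-1/8$ shift to produce the $7/8$), is genuinely bounded by $0.2/T$ down to $T = e$. This is a routine but slightly delicate estimate of $\arg\Gamma(\tfrac14 + \tfrac{iT}{2})$ via its integral representation or known explicit bounds (for instance those in \cite{TrudS}); I expect this verification of the numerical constant to be the main, and essentially only, obstacle, everything else being a direct substitution.
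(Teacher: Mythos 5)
Your proposal is correct and follows essentially the same route as the paper: the identity $N(T) = \pi^{-1}\vartheta(T) + 1 + S(T)$ you invoke is exactly what the paper's contour computation produces, and the Stirling estimate $|\Delta(T)|\le 0.2/T$ is precisely the content of inequality (\ref{3.11}), which records
\[
\bigg|N(T) - \frac{T}{2\pi}\log\frac{T}{2\pi e } -\frac{7}{8}\bigg| \leq \frac{1}{4\pi}\tan^{-1}\frac{1}{2T} + \frac{T}{4\pi}\log\Big(1 + \frac{1}{4T^2}\Big) + \frac{1}{3\pi T} + |S(T)|\leq \frac{0.2}{T} + |S(T)|
\]
for $T\ge 1$. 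The one step you defer, verifying the numerical constant $0.2$, is therefore already done in the paper, and the corollary is then an immediate substitution of Theorem~\ref{Onenad} together with $1/T\le 1/T_0$.
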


It is known (see, e.g., \cite{Brent, terinter}) that\footnote{Indeed, (\ref{Odl}) is the statement that Gram's Law holds for all $0\leq T \leq 280$ and that Rosser's Rule holds for all $0\leq T \leq 6.8\cdot 10^{6}$.}
\begin{equation}\label{Odl}
|S(T)| \leq 1, \textrm{for } 0\leq T \leq 280, \quad |S(T)|\leq 2, \textrm{for } 0\leq T\leq 6.8 \cdot10^6.
\end{equation}
The approach taken in this paper is to prove results initially for $T>T_{0}>6.8 \cdot 10^{6}$, and then for all $T$ by using (\ref{Odl}). Note that Theorem \ref{Onenad} is sharper than Rosser's bound in \cite{Rossers} whenever $T\geq 6.4 \cdot 10^{6}$; for smaller values of $T$ one is better placed using (\ref{Odl}), which is superior to both Theorem \ref{Onenad} and the bound in\cite{Rossers}.

Explicit bounds on $S(T)$ are used in conjunction with the verification of the Riemann hypothesis to a certain height. Hence there is some interest in obtaining, not necessary the smallest coefficient of $\log T$ in Theorem \ref{Onenad}, but good bounds of the form $|S(T)|\leq \alpha\log T$ for all $T\geq T_{0}$, where $T_{0}$ is the point up to which the Riemann hypothesis has been verified. From (\ref{opgam}) one can write
\begin{equation*}
|S(T)|\leq \log T \left(a + \frac{b\log\log T_{0}}{\log T_{0}} + \frac{c}{\log T_{0}}\right) = \alpha\log T,
\end{equation*}
for $T\geq T_{0}\geq e^{e}$. Values of $\alpha$ have been provided in the following table. The parameters $\eta$ and $r$ are those found in (\ref{Sfinal}).

\begin{table}[h]
\caption{Bounds on $|S(T)|\leq \alpha\log T$ for $T\geq T_{0}$}
\label{opp2}
\centering
\begin{tabular}{c c c c}
\hline\hline
$T_{0}$ & $\alpha$ & $\eta$ & $r$ \\[0.5 ex] \hline
$10^{6}$ & 0.260 & 0.28 & 2.28 \\
$10^{7}$ & 0.246 & 0.24 & 2.35 \\
$10^{8}$ & 0.235 & 0.22 & 2.38 \\
$10^{9}$ & 0.226 & 0.19 & 2.44 \\
$10^{10}$ & 0.218 & 0.17 & 2.49 \\
$10^{11}$ & 0.212 & 0.16 & 2.51  \\
$10^{12}$ & 0.206 & 0.15 & 2.53 \\
$10^{13}$ & 0.202 & 0.14 & 2.51  \\
$10^{14}$ & 0.197 & 0.13 & 2.50 \\
$10^{15}$ & 0.193 & 0.12 & 2.48 \\
 \hline\hline
\end{tabular}
\end{table}

Throughout the paper $\eta$ denotes a parameter satisfying $0<\eta\leq \frac{1}{2}$.

\section{Introduction}
Write $a(s) = (s-1)\zeta(s)$, whence $a(s)$ is an entire function. 
The function 
\begin{equation*}\label{xi1}
\xi(s) = \tfrac{1}{2} s(s-1)\pi^{-s/2}\Gamma(\tfrac{s}{2})\zeta(s) = \tfrac{1}{2} s\pi^{-s/2}\Gamma(\tfrac{s}{2})a(s)
\end{equation*}
is entire and satisfies the functional equation
\begin{equation}\label{xif}
\xi(s) = \xi(1-s).
\end{equation}
Let $N(T)$ denote the number of zeroes $\rho = \beta + i\gamma$ of $\zeta(s)$ for which $0< \beta <1$ and $0< \gamma <T$. For any $\sigma_{1}\in(1, 2]$ form a rectangle with vertices at $\sigma_{1} \pm iT$ and $1-\sigma_{1} \pm iT$. Let $\mathcal{C}$ denote the portion of the rectangle in the region $\Re(s) \geq \frac{1}{2}$ and $\Im(s)\geq 0$. Write $\mathcal{C}$ as the union of two straight lines, viz.\ let $\mathcal{C} = \mathcal{C}_{1} + \mathcal{C}_{2}$, where $\mathcal{C}_{1}$ connects $0$ to $\sigma_{1} + iT$ and $\mathcal{C}_{2}$ connects $\sigma_{1} + iT$ to $\frac{1}{2} + iT$.   From Cauchy's theorem and (\ref{xif})  one deduces that $\pi N(T) =  \Delta_{\mathcal{C}} \arg \xi(s).$ Thus
\begin{equation}\label{N2}
\begin{split}
\pi N(T) &= \Delta_{\mathcal{C}}\arg\pi^{-s/2} + \Delta_{\mathcal{C}}\arg s(s-1) + \Delta_{\mathcal{C}}\arg\Gamma\left(\frac{s}{2}\right)  + \pi S(T),
\end{split}
\end{equation}
where
\begin{equation}\label{sdeee}
\pi S(T) = \Delta_{\mathcal{C}_{1}}\arg \zeta(s)+\Delta_{\mathcal{C}_{2}}\arg a(s) - \Delta_{\mathcal{C}_{2}}\arg (s-1).
\end{equation}
The only terms in (\ref{N2}) and (\ref{sdeee}) that require more than a passing mention are $\Delta_{\mathcal{C}_{1}}\arg \zeta(s), \Delta_{\mathcal{C}}\arg\Gamma\left(\frac{s}{2}\right)$ and $\Delta_{\mathcal{C}_{2}}\arg a(s)$. For the first use
\begin{equation*}
|\arg \zeta(\sigma_{1} + it)| \leq |\log \zeta(\sigma_{1} + it)| \leq \log\zeta(\sigma_{1}),
\end{equation*}
and for the second use
\begin{equation*}\label{Stirling}
\log\Gamma(z) = \left(z-\frac{1}{2}\right)\log z - z + \frac{1}{2} \log 2\pi + \frac{\theta}{6|z|},
\end{equation*}
(see, e.g., \cite[p.\ 294]{Olver}) which is valid for $|\arg z| \leq \frac{\pi}{2}$, and in which $\theta$ denotes a complex number satisfying $|\theta| \leq 1$. To estimate $\Delta_{\mathcal{C}_{2}}\arg a(s)$ write
\begin{equation}\label{wwbw}
f(s) = \frac{1}{2} \{a(s+ iT)^{N} + a(s-iT)^{N}\},
\end{equation}
for some positive integer $N$, to be determined later. Thus $f(\sigma) = \Re a(\sigma + iT)^{N}$. Suppose that there are $n$ zeroes of $\Re a(\sigma + iT)^{N}$ for $\sigma\in \mathcal{C}_{2}$. These zeroes partition the segment into $n+1$ intervals. On each interval $\arg a(\sigma + iT)^{N}$ can increase by at most $\pi$, whence
\begin{equation*}
|\Delta_{\mathcal{C}_{2}} \arg a(s)| = \frac{1}{N} |\Delta_{\mathcal{C}_{2}} \arg a(s)^{N}| \leq \frac{(n+1)\pi}{N}
\end{equation*}
In conclusion, when $T\geq 1$
\begin{equation}\label{3.11}
\begin{split}
\bigg|N(T) - \frac{T}{2\pi}\log\frac{T}{2\pi e } -\frac{7}{8}\bigg| &\leq \frac{1}{4\pi}\tan^{-1}\frac{1}{2T} + \frac{T}{4\pi}\log\left(1 + \frac{1}{4T^2}\right) + \frac{1}{3\pi T} + |S(T)|\\
&\leq \frac{0.2}{T} + |S(T)|,
\end{split}
\end{equation}
where
\begin{equation*}\label{3.333}
\begin{split}
|S(T)| &\leq \frac{(n+1)}{N} + \frac{1}{\pi}\log\zeta(\sigma_{1}) + \frac{1}{\pi}\tan^{-1}\frac{1}{2T} + \frac{1}{\pi}\tan^{-1}\frac{\sigma_{1}}{T}\\
&\leq \frac{(n+1)}{N} + \frac{1}{\pi}\log\zeta(\sigma_{1}) + \frac{0.8}{T}.
\end{split}
\end{equation*}
The inequality in (\ref{3.11}) enables one to deduce Corollary \ref{onlycor} from Theorem \ref{Onenad}.

\section{Estimating $n$}
One may estimate $n$ with Jensen's Formula.
\begin{Lem}[Jensen's Formula]\label{JL}
Let $f(z)$ be holomorphic for $|z-a|\leq R$ and non-vanishing at $z=a$. Let the zeroes of $f(z)$ inside the circle be $z_{k}$, where $k=1, 2, \ldots, n$, and let $|z_{k} - a| = r_{k}$. Then
\begin{equation}\label{JF1}
\log\frac{R^{n}}{|r_{1} r_{2} \cdots r_{n}|} = \frac{1}{2\pi}\int_{0}^{2\pi} \log f(a+ Re^{i\phi})\, d\phi - \log |f(a)|.
\end{equation}
\end{Lem}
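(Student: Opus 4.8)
The plan is to prove Jensen's Formula, which is the classical result relating the size of a holomorphic function on a circle to the moduli of its zeros inside. I would first reduce to the case where $f$ has no zeros on the boundary circle $|z-a|=R$ (a standard perturbation/continuity argument handles the degenerate case, or one simply assumes it as part of the hypothesis). After the translation $z \mapsto z-a$ it suffices to take $a=0$, so write $r_k = |z_k|$ and prove
\begin{equation*}
\frac{1}{2\pi}\int_{0}^{2\pi} \log|f(Re^{i\phi})|\, d\phi = \log|f(0)| + \sum_{k=1}^{n}\log\frac{R}{r_k}.
\end{equation*}

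The key step is to strip off the zeros using Blaschke-type factors. Define
\begin{equation*}
g(z) = f(z)\prod_{k=1}^{n}\frac{R^2 - \overline{z_k}\,z}{R(z-z_k)},
\end{equation*}
so that $g$ is holomorphic and non-vanishing in the disk $|z|\le R$ (each factor cancels a zero of $f$ and introduces no new zero inside, since $R^2-\overline{z_k}z$ vanishes only at $z = R^2/\overline{z_k}$, which lies outside the disk when $|z_k|<R$). On the boundary $|z|=R$ one checks $|R^2 - \overline{z_k}z| = |z|\,|\overline{z}\cdot z_k^{*}\cdots|$ — more cleanly, for $|z|=R$ we have $|R^2-\overline{z_k}z| = |z|\cdot|R^2/\bar z - z_k| = R\,|R e^{-i\phi} - \overline{z_k}\,e^{-i\phi}\cdots|$; the clean identity is that $|R^2 - \overline{z_k} z| = R\,|z - z_k|$ when $|z|=R$, because $R^2 - \overline{z_k}z = \bar z(z\bar z^{-1}R^2 - \overline{z_k}z)$... the simplest verification: on $|z|=R$, $|R^2-\overline{z_k}z|^2 = (R^2-\overline{z_k}z)(R^2 - z_k\bar z) = R^4 - R^2(z_k\bar z + \overline{z_k}z) + |z_k|^2R^2 = R^2(R^2 - z_k\bar z - \overline{z_k}z + |z_k|^2) = R^2|z-z_k|^2$. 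Hence each Blaschke factor has modulus $1$ on $|z|=R$, so $|g(z)| = |f(z)|$ there.

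Now apply the mean value property of harmonic functions: since $g$ is holomorphic and non-vanishing on the disk, $\log|g(z)|$ is harmonic there, so
\begin{equation*}
\log|g(0)| = \frac{1}{2\pi}\int_0^{2\pi}\log|g(Re^{i\phi})|\,d\phi = \frac{1}{2\pi}\int_0^{2\pi}\log|f(Re^{i\phi})|\,d\phi.
\end{equation*}
On the other hand, evaluating the product at $z=0$ gives $g(0) = f(0)\prod_{k=1}^n \frac{R^2}{R(-z_k)} = f(0)\prod_{k=1}^n \frac{R}{-z_k}$, so $\log|g(0)| = \log|f(0)| + \sum_{k=1}^n \log(R/r_k)$. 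Combining the two expressions for $\log|g(0)|$ yields the stated formula. The paper's version writes $\log f$ rather than $\log|f|$ inside the integral, which is the same thing when one takes real parts / principal branches consistently, as is standard; I would note that $\log$ there denotes $\log|\cdot|$.

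The main obstacle — really the only subtlety — is the behaviour of $\log|f|$ near the zeros $z_k$ inside the disk and near any zeros on the boundary. Inside, $\log|f(Re^{i\phi})|$ need not be bounded but the singularities are only logarithmic in the radial variable, hence integrable, so the mean value identity applied to $g$ (which has no such singularities) cleanly sidesteps the issue; this is exactly why the Blaschke product is introduced rather than trying to integrate $\log|f|$ directly over sub-circles. If one instead wanted $f$ to be allowed to vanish on $|z|=R$, one would argue by a limiting procedure letting $R$ vary slightly, but since the lemma as stated already requires $f$ non-vanishing at the center and it is applied in this paper with an $R$ chosen to avoid boundary zeros, that refinement is not needed here.
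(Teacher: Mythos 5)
Your Blaschke-factor proof of Jensen's Formula is correct and is the standard textbook argument; note, though, that the paper does not actually supply a proof of this lemma — it is stated as a classical result and used directly, so there is no in-paper argument to compare against. You correctly flag the typographical slip in the paper's statement of \eqref{JF1}: the integrand should read $\log|f(a+Re^{i\phi})|$ rather than $\log f(a+Re^{i\phi})$.

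One small imprecision in your closing commentary: when the zeros $z_k$ all lie strictly inside $|z-a|<R$, the function $f$ is non-vanishing on the boundary circle, so $\log|f(a+Re^{i\phi})|$ is already continuous and bounded in $\phi$; there is no integrability issue coming from interior zeros. The integrable-logarithmic-singularity phenomenon is only relevant if one allows zeros on the circle $|z-a|=R$ itself, a case you correctly observe can be handled by a limiting argument in $R$ or simply excluded (as it is here, since the radius $R=r(\tfrac12+\eta)$ is a free parameter that can be perturbed to avoid boundary zeros). The Blaschke factors, as you say, eliminate even this concern. Everything else — the boundary-modulus identity $|R^2-\overline{z_k}z|=R|z-z_k|$ on $|z|=R$, harmonicity of $\log|g|$, the evaluation of $g(0)$ — is checked correctly.
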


The following lemma, proved in \cite{TrudDed}, is now used to invoke Backlund's trick. For a complex-valued function $f(s)$, define $\Delta_{\pm}\arg f(s)$ to be the change in argument of $f(s)$ as $\sigma$ varies from $\frac{1}{2}$ to $\frac{1}{2} \pm \delta$, where $\delta>0$.

\begin{Lem}\label{BL}\textbf{(i)}
Let $N$ be a positive integer and let $T\geq T_{0}\geq 1$. Suppose that
\begin{equation*}
| \Delta_{+}\arg a(s) + \Delta_{-}\arg a(s)| < E,
\end{equation*}
where $E = E(\delta, T_{0})$. If there are $n$ zeroes of $\Re a(\sigma + iT)^{N}$ for $\sigma \in [\frac{1}{2}, \sigma_{1}]$, then there are at least $n-1 - [N E/\pi]$ zeroes in $\sigma \in [1-\sigma_{1}, \frac{1}{2}]$.

\textbf{(ii)}
Denote the zeroes in $[\frac{1}{2}, \sigma_{1}]$ by $\rho_{\nu} = a_{\nu} + iT$ where $\frac{1}{2} \leq a_{n} \leq a_{n-1} \leq \cdots \leq \sigma_{1}$, and the zeroes in $[1-\sigma_{1}, \frac{1}{2}]$ by $\rho_{\nu}' = a_{\nu}' + iT$ where $1-\sigma_{1} \leq a_{1}' \leq a_{2}' \leq \cdots \leq \frac{1}{2}$. Then
\begin{equation*}\label{jenpre}
a_{\nu} \geq 1-a_{\nu}', \quad \textrm{for } \nu = 1, 2, \ldots, n-1 - [NE/\pi],
\end{equation*}
and, if $\sigma_{1} = \frac{1}{2} + \sqrt{2}(\eta + \frac{1}{2})$, then
\begin{equation}\label{finalae}
\prod_{\nu = 1}^{n} |1 + \eta - a_{\nu}| \prod_{\nu = 1}^{n -1 - [NE/\pi]} |1 + \eta - a_{\nu}'|\leq (\tfrac{1}{2} + \eta)^{2n-1 - [NE/\pi]}.
\end{equation}
\end{Lem}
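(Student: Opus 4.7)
The plan is to prove both parts via Backlund's reflection trick, exploiting the fact that $\xi(s)=\xi(1-s)$ induces an approximate symmetry for $a(s)$ under $\sigma\mapsto 1-\sigma$, with defect precisely quantified by $E$. Writing $\xi(s)=G(s)a(s)$ with $G(s)=\tfrac{1}{2}s\pi^{-s/2}\Gamma(s/2)$, let $\alpha(\sigma)=\arg a(\sigma+iT)$ and $\beta(\sigma)=\arg G(\sigma+iT)$ denote continuous branches along $\Im s=T$, normalised so that $\arg\xi(\tfrac{1}{2}+iT)=0$. Combining $\xi(1-s)=\xi(s)$ with $\xi(\bar s)=\overline{\xi(s)}$ gives $\xi(1-\sigma+iT)=\overline{\xi(\sigma+iT)}$, from which one deduces
\[
\alpha(1-\sigma)+\alpha(\sigma)=-\bigl[\beta(1-\sigma)+\beta(\sigma)\bigr].
\]
Consequently the hypothesis $|\Delta_+\arg a+\Delta_-\arg a|<E$ is precisely $|s(\sigma_1)|<NE$, where the symmetrised shift function $s(\sigma):=N[\beta(\sigma)+\beta(1-\sigma)-2\beta(\tfrac{1}{2})]$ vanishes at $\sigma=\tfrac{1}{2}$.

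For part (i), zeros of $\Re a(\sigma+iT)^N$ on $[\tfrac{1}{2},\sigma_1]$ solve $N\alpha(\sigma)\equiv\tfrac{\pi}{2}\pmod{\pi}$. Using the identity above, their counterparts on $[1-\sigma_1,\tfrac{1}{2}]$ pull back under $\sigma\mapsto 1-\sigma$ to solutions of $N\alpha(\sigma)+s(\sigma)\equiv\tfrac{\pi}{2}\pmod{\pi}$ on $[\tfrac{1}{2},\sigma_1]$. Thus both zero sets are level-crossings of the same function $N\alpha(\sigma)$, the second one perturbed by $s(\sigma)$. A Sturm-type count shows that each full $\pi$ of variation of $s$ can destroy at most one level-crossing, so together with a possible single endpoint mismatch at $\sigma_1$ one obtains at least $n-1-[NE/\pi]$ genuine left-hand zeros.

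For part (ii), the pairing from (i) is order-preserving after reflection: the $\nu$-th right-hand zero counted from the right (namely $a_\nu$) pairs with the $\nu$-th left-hand zero counted from the left (namely $a_\nu'$), yielding $a_\nu\geq 1-a_\nu'$ for $\nu=1,\ldots,n-1-[NE/\pi]$. The product inequality (\ref{finalae}) then follows from the elementary bound
\[
(1+\eta-a_\nu)(1+\eta-a_\nu')\leq(1+\eta-a_\nu)(a_\nu+\eta)=(\tfrac{1}{2}+\eta)^2-(a_\nu-\tfrac{1}{2})^2\leq(\tfrac{1}{2}+\eta)^2
\]
for each paired index, combined with the unconditional bound $|1+\eta-a_\nu|\leq\tfrac{1}{2}+\eta$ for the remaining $1+[NE/\pi]$ unpaired right-hand indices; the choice $\sigma_1=\tfrac{1}{2}+\sqrt{2}(\eta+\tfrac{1}{2})\leq\tfrac{3}{2}+2\eta$ ensures this latter inequality. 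Multiplying the $n-1-[NE/\pi]$ paired bounds with the $1+[NE/\pi]$ unpaired ones gives the exponent $2n-1-[NE/\pi]$.

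The main obstacle is the counting step in part (i): one must verify that the \emph{total variation} of $s$ on $[\tfrac{1}{2},\sigma_1]$ (not merely its endpoint value) governs the zero loss, and that each $\pi$ of variation costs at most one zero. This requires a monotonicity (or controlled-oscillation) property of $s$, extracted from Stirling's expansion applied to $\beta$. Once this level-crossing count is secured, the order-preserving pairing in (ii) and the product inequality become essentially elementary bookkeeping.
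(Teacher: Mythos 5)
The paper does not actually prove Lemma~\ref{BL}: it simply points to \cite[Lemma 2]{TrudDed} and remarks that the proof carries over verbatim. So your argument is a blind reconstruction, and one can only judge it against the expected Backlund--Rosser style proof.

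Your framework is the right one: the reflection identity $\alpha(\sigma)+\alpha(1-\sigma)=-[\beta(\sigma)+\beta(1-\sigma)]$ coming from $\xi(1-\sigma+iT)=\overline{\xi(\sigma+iT)}$, the defect function $s(\sigma)$, and the reduction to comparing level-crossings of $N\alpha$ and $N\alpha+s$. Part (ii), given part (i), is essentially correct bookkeeping, although your displayed chain
$(1+\eta-a_\nu)(1+\eta-a_\nu')\leq(1+\eta-a_\nu)(a_\nu+\eta)$
is stated for the signed product and reverses when $a_\nu>1+\eta$ (which can occur, since $\sigma_1>1+\eta$). One should work throughout with $|1+\eta-a_\nu|\,|1+\eta-a_\nu'|\leq|1+\eta-a_\nu|\,(a_\nu+\eta)=|(\tfrac12+\eta)^2-(a_\nu-\tfrac12)^2|$, and it is precisely the hypothesis $\sigma_1-\tfrac12=\sqrt2(\tfrac12+\eta)$ that keeps this $\leq(\tfrac12+\eta)^2$ in the exceptional range; your remark $\sigma_1\leq\tfrac32+2\eta$ handles only the unpaired factors, not the paired ones when $a_\nu>1+\eta$.

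The genuine gap, which you yourself flag, is the crossing count in part (i). As written, nothing you state prevents the perturbation $s$ from destroying more than $[NE/\pi]+1$ crossings, because $N\alpha$ is not monotone and can graze a level repeatedly. Your diagnosis in terms of ``total variation of $s$'' is not quite right either: the hypothesis in the paper (read together with \S3.1, where $E=G(\sqrt2,T_0)$ dominates $G(\delta,T_0)$ for all $\delta\leq\sqrt2(\tfrac12+\eta)$) gives the uniform bound $|s(\sigma)|<NE$, not a variation bound, and a uniform bound alone does not control crossing loss. What the Backlund--Rosser argument actually uses is the monotonicity of $\beta(\sigma)+\beta(1-\sigma)$ in $\sigma$ (from Stirling), which lets one pair each right-hand zero $a_\nu$ with a left-hand zero at some $a_\nu'\geq 1-a_\nu$ by the intermediate value theorem, interval by interval; this single pairing argument delivers both the count in (i) and the inequality $a_\nu\geq 1-a_\nu'$ in (ii) simultaneously. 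In your write-up, the latter inequality is asserted to follow from an ``order-preserving pairing'' but no such pairing has actually been constructed, so both the count and the ordering remain unproved. To close the gap you need to extract and use the monotonicity of the Gamma-factor defect, not just its boundedness.
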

\begin{proof}
This was proved in \cite[Lemma 2]{TrudDed} for Dirichlet $L$-functions; the proof for $a(s) = (s-1)\zeta(s)$ is identical.
\end{proof}
\subsection{Calculation of $E$}
One may use \cite[(5.4)]{TrudDed} to estimate $E$ in Lemma \ref{BL}. 
\begin{equation*}\label{Gdef}
\begin{split}
E\leq G(\delta,T) =& (-\frac{5}{4} + \frac{\delta}{2}) \tan^{-1}\frac{\frac{1}{2} + \delta}{T} - (\frac{5}{4} + \frac{\delta}{2}) \tan^{-1}\frac{\frac{1}{2} - \delta}{T}  + \frac{5}{4} \tan^{-1} \frac{1}{2T}\\
&\quad -\frac{T}{4}\log\left[1+ \frac{2\delta^{2}(T^{2} - \frac{1}{4}) + \delta^{4}}{\left(T^{2} + \frac{1}{4}\right)^{2}}\right]
+\frac{4\theta}{3T}.
\end{split}
\end{equation*}
One can show that $G(\delta, T)$ is decreasing in $T$ and increasing in $\delta$. Therefore, since, in Lemma \ref{BL} (i) one takes $\sigma_{1} = \frac{1}{2} + \sqrt{2}(\frac{1}{2} + \eta)$, it follows that $\delta \leq \sqrt{2}(\frac{1}{2} + \eta)$, whence 
\begin{equation*}\label{Etake}
E \leq  G(\sqrt{2}, T_{0}) \leq \frac{4.4}{T_{0}},
\end{equation*}
for $T\geq T_{0}$.

\subsection{Applying Jensen's Formula}

In Lemma \ref{JL}, take $a = 1+\eta$, $f(z)$ as in (\ref{wwbw}), and $R = r(\frac{1}{2}+ \eta)$, where $r>1$. Suppose that there are $n$ zeroes of $\Re a(\sigma+ iT)^{N}$ for $\sigma \in [\frac{1}{2}, \sigma_{1}]$, where $\sigma_{1} = \frac{1}{2} + \sqrt{2}(\eta + \frac{1}{2})$. Initially, to take advantage of Backlund's trick, one needs $1 + \eta - r(\frac{1}{2} + \eta) \leq 1-\sigma_{1}$, so that all of the zeroes are included in the contour. The argument in \cite[\S 4.1]{TrudDed} shows that one can use any $r>1$. The following results are simplified greatly if one imposes an upper bound on $r$. Indeed, to use (\ref{Gestimate}) requires $r(\frac{1}{2} + \eta) \leq \frac{3}{2} + \eta \leq 2$.

To apply Jensen's formula it is necessary to show that $f(1+\eta)$ is non-zero: this is easy to do upon invoking an observation due to Rosser \cite{Rossers}. Write $a(1+\eta + iT) = Ke^{i\psi}$, where $K> 0$. Choose a sequence of $N$'s tending to infinity for which $N\psi$ tends to zero modulo $2\pi$. Thus
\begin{equation}\label{Rost}
\frac{f(1+\eta)}{|a(1+\eta+iT)|^{N}} \rightarrow 1.
\end{equation}

It follows from (\ref{JF1}) and (\ref{finalae}) that
\begin{equation*}\label{nbound}
n \leq \frac{1}{4\pi\log r} J - \frac{1}{2\log r} \log |f(1+\eta)| +\frac{1}{2} + \frac{NE}{2\pi},
\end{equation*}
where
\begin{equation*}\label{71}
J = \int_{-\frac{\pi}{2}}^{\frac{3\pi}{2}} \log|f(1+\eta + r(\tfrac{1}{2} + \eta)e^{i\phi})|\, d\phi.
\end{equation*}
First one may bound $\log|f(1+ \eta)|$ using (\ref{Rost}) and the trivial bound $|\zeta(s)| \geq \frac{\zeta(2\sigma)}{\zeta(\sigma)}$. Thus\begin{equation*}\label{f0}
\log |f(1+\eta)| =  N\log |a(1+ \eta +iT)| + o(1) \geq N\left(\log  T + \log\frac{\zeta(2+2\eta)}{\zeta(1+ \eta)}\right) + o(1).
\end{equation*}

\section{Estimating the integrals}
Divide $J$ into five pieces thus
\begin{equation*}
\begin{split}
R_{0}&=  \{ s: 1+ \eta\leq \sigma \leq 1+ \eta + r(\tfrac{1}{2} + \eta)\}  =\{\phi: -\tfrac{\pi}{2} \leq \phi \leq \tfrac{\pi}{2}\},\\
R_{1} &= \{ s: 1\leq \sigma \leq 1+ \eta\; \textrm{and } t\geq T\} = \{\phi: \tfrac{\pi}{2} \leq \phi \leq \tfrac{\pi}{2}+ \phi_{1}\}, \\
R_{2} &= \{s: \tfrac{1}{2}\leq \sigma \leq 1\; \textrm{and } t\geq T\}= \{\phi: \tfrac{\pi}{2} + \phi_{1} \leq \phi \leq \tfrac{\pi}{2} + \phi_{2}\}, \\
R_{3} &= \{ s: 0\leq \sigma \leq \tfrac{1}{2}\; \textrm{and } t\geq T\}= \{\phi: \tfrac{\pi}{2} + \phi_{2} \leq \phi \leq \tfrac{\pi}{2} + \phi_{3}\},  \\
R_{4} &= \{ s: 1+ \eta - r(\tfrac{1}{2} + \eta)\leq \sigma \leq 0\; \textrm{and } t\geq T\} = \{\phi: \tfrac{\pi}{2} + \phi_{3} \leq \phi \leq \pi\},  \\
 \end{split}
\end{equation*}
where
\begin{equation*}
\phi_{1} = \sin^{-1} \frac{\eta}{r(\frac{1}{2} + \eta)}, \quad  \phi_{2} = \sin^{-1} \frac{1}{r}, \quad \phi_{3} = \sin^{-1} \frac{1+\eta}{r(\frac{1}{2} + \eta)}.
 \end{equation*}

One may then write $J = \int_{R_{0}} + 2 \int_{R_{1}} + 2 \int_{R_{2}} + 2 \int_{R_{3}} + 2 \int_{R_{4}}.$ In estimating each integral some small error terms labelled $\epsilon_{0}, \ldots, \epsilon_{4}$ are encountered. Since these are all $O(T_{0}^{-1})$, they have been estimated with a great deal of alacrity. It is neither essential nor insightful to strive for the smallest bounds on these terms.

\subsection{Convexity bounds}
To estimate $\zeta(s)$ on $R_{0}$ one may use the trivial estimate $|\zeta(s)| \leq \zeta(\sigma)$. On $R_{1}, \ldots, R_{4}$ one can use the following version of the Phragm\'{e}n--Lindel\"{o}f principle.
\begin{Lem}\label{lem1}
Let $a, b, Q$ and $k$ be real numbers, such that $Q+a>1$, and let $ f(s)$ be regular analytic in the strip $\leq a\leq \sigma \leq b$ and satisfy the growth condition 
$$ |f(s)| <C\exp \left\{e^{k|t|}\right\},$$ for a certain $C>0$ and for $0<k<\pi/(b-a)$. Also assume that
\[|f(s)|\leq\left\{\begin{array}{ll}
A|Q+s|^{\alpha_{1}} (\log|Q+ s|)^{\alpha_{2}} &  \mbox{for $\Re(s) = a,$}\\
B|Q+s|^{\beta_{1}}(\log |Q + s|)^{\beta_{2}} & \mbox{for $\Re(s) = b,$}\\
\end{array}
\right.\]
where $\alpha_{1} \geq \beta_{1}$ and where $\alpha_{1}, \alpha_{2}, \beta_{1}, \beta_{2} \geq 0$. Then throughout the strip $a \leq \sigma \leq b$ the following holds
$$ |f(s)| \leq \{A|Q+s|^{\alpha_{1}}|\log(Q+s)|^{\alpha_{2}}\}^{\frac{b-\sigma}{b-a}}  \{B|Q+s|^{\beta_{1}}|\log(Q+s)|^{\beta_{2}}\}^{\frac{\sigma-a}{b-a}}.$$
\end{Lem}
\begin{proof}
This extends a result due to Rademacher \cite[pp.\ 66-67]{Rad} so as to incorporate logarithms. Form the function 
\begin{equation*}
F(s) = f(s) \phi(s;Q)E^{-1}e^{-vs} \left\{\log (Q+s)\right\}^{\frac{\alpha_{2}(\sigma - b) + \beta_{2}(a-\sigma)}{b-a}},
\end{equation*}
where $\phi(s; Q)$ is the function of \cite[Theorem 1]{Rad}, and $E$ and $\nu$ are determined by $A = E e^{\nu a}$ and\footnote{Note that in \cite[(3.7)]{Rad} there is a typo: $B= e^{\nu b}$ should read $B= Ee^{\nu b}$.}  $B = Ee^{\nu b}$. Since $Q+a>1$, the function $F(s)$ is holomorphic in the strip $a \leq \sigma \leq b$. The proof now proceeds as in \cite{Rad}.
\end{proof}

Lemma \ref{lem1} will be applied to $R_{1}, \ldots, R_{4}$ where it will be convenient to write $|Q + s|$ in terms of $T$. If $Q = Q_{0}\leq 1000$ and $T\geq T_{0}\geq 10^{6}$ one may write
\begin{equation}\label{schle}
|\log|Q_{0} + s| - \log T| \leq \frac{6}{T_{0}} \leq 10^{-5}.
\end{equation}
If, in addition, $Q_{0}\geq 1$, then $|\arg (Q_{0} + s) |\leq \frac{\pi}{2}$ on $R_{1}, \ldots, R_{4}$. Using this, (\ref{schle}), and the identity
\begin{equation*}
|\log z| = |\log |z|| \left\{ 1 + \left(\frac{\arg z}{\log |z|}\right)^{2}\right\}^{1/2},
\end{equation*}
one deduces that
\begin{equation}\label{schle2}
|\log (Q_{0} + s)|\leq 1.007\log T, \quad\quad \log|\log (Q_{0} + s)| \leq\log\log T + 0.007.
\end{equation}

\subsection{$R_{0}$}\label{Trivial}

On $R_{0}$
\begin{equation*}
|a(s)| \leq |\eta + r(\tfrac{1}{2} + \eta)e^{i\phi} \pm iT|\, \zeta(1 + \eta + r(\tfrac{1}{2} + \eta)\cos \phi),
 \end{equation*}
whence 
\begin{equation*}\label{Ieq}
\frac{R_{0}}{N} \leq \pi (\log T + \epsilon_{0}) + \int_{-\pi/2}^{\pi/2} \log\zeta(1+ \eta + r(\tfrac{1}{2} + \eta)\cos\phi)\, d\phi,
\end{equation*}
where
\begin{equation*}\label{epsdef}
\epsilon_{0} = \frac{r(\frac{1}{2} + \eta)}{T_{0}} + \frac{\{\eta + r(\frac{1}{2} + \eta)\}^{2}}{2T_{0}^2} \leq \frac{2}{T_{0}} + \frac{25}{8T_{0}^{2}}\leq \frac{3}{T_{0}}.
\end{equation*}

\subsection{$R_{1}$}
On $\sigma = 1+ \eta$ bound $\zeta(\sigma)$ trivially, whence, for any $Q_{0} \geq 0$
\begin{equation}\label{bound4}
|a(1+ \eta + it)| \leq |Q_{0} + (1+\eta + it)| \zeta(1+ \eta).
\end{equation}

On $\sigma = 1$ one may make use of Backlund's estimate \cite[(53)]{Backlund1918} that 
\begin{equation}\label{Best}
|\zeta(1+ it)|\leq \log t,
\end{equation}
for $t\geq 50$. This, and a computation check for small $t$ shows that
\begin{equation}\label{bound3}
|a( 1+ it)| \leq |Q_{0} + (1 + it)| \log|Q_{0} + (1+ it)|,
\end{equation}
for all $t$ and for any $Q_{0}\geq 1$. It follows from Lemma \ref{lem1}, (\ref{schle}), (\ref{schle2}), (\ref{bound4}) and (\ref{bound3}) that
\begin{equation*}
\begin{split}
\frac{R_{1}}{N} &\leq (\log T + 10^{-5})\phi_{1} + \log\zeta(1+ \eta) \left[ \phi_{1} - \frac{r(\frac{1}{2} + \eta)(1 - \cos\phi_{1})}{\eta}\right] \\
&+ (\log\log T + 0.007) \left[\frac{r(\frac{1}{2} + \eta)(1- \cos\phi_{1})}{\eta}\right].
\end{split}
\end{equation*}

\subsection{$R_{2}$}
Suppose that one is equipped with a bound 
\begin{equation*}\label{zeta12}
|\zeta(\tfrac{1}{2} + it)|\leq k_{1} t^{k_{2}} (\log t)^{k_{3}}, \quad \textrm{for } t\geq t_{0},
\end{equation*}
in which $0\leq k_{3}\leq 10$, say. This upper bound on $k_{3}$ is imposed merely to simplify the resulting error term. The convexity bound for $\zeta(s)$ shows that $k_{2}\leq \frac{1}{4}$.

It follows that
\begin{equation}\label{bound2}
|a(\tfrac{1}{2} + it)| \leq k_{1}|Q_{0} + (\tfrac{1}{2} + it)|^{k_{2} + 1} (\log|Q_{0} + (\tfrac{1}{2} + it)|)^{k_{3}}, \quad \textrm{for } t\geq t_{0},
\end{equation}
for any $Q_{0}\geq 0$. It is always possible to choose $Q_{0}$ large enough so that (\ref{bound2}) holds for all $t$. It follows from Lemma \ref{lem1}, (\ref{schle}), (\ref{schle2}), (\ref{bound3}) and (\ref{bound2}) that
\begin{equation*}
\begin{split}
\frac{R_{2}}{N} &\leq (\log T + 10^{-5})\left[ 2k_{2}r(\tfrac{1}{2} + \eta)(\cos\phi_{1} - \cos\phi_{2}) + (\phi_{2} - \phi_{1})(1- 2k_{2}\eta)\right] \\
&+ 2\log k_{1} \left[r(\tfrac{1}{2} + \eta)(\cos\phi_{1} - \cos\phi_{2}) - \eta(\phi_{2} - \phi_{1})\right]\\
&+ 2(\log\log T + 0.007)[ r(\tfrac{1}{2} +\eta)(1- k_{3})(\cos\phi_{2} - \cos\phi_{1})\\
&\qquad\qquad\qquad\qquad\quad\quad+ (\phi_{2} - \phi_{1})(\tfrac{1}{2} + \eta - k_{3}\eta)].\\
\end{split}
\end{equation*}

\subsection{$R_{3}$}
The functional equation
\begin{equation}\label{functe}
\zeta(s) = \pi^{s-\frac{1}{2}} \frac{\Gamma(\frac{1}{2} - \frac{1}{2}s)}{\Gamma(\frac{1}{2}s)} \zeta(1-s),
\end{equation}
(see, e.g., \cite[Ch.\ II]{Titchmarsh}), the estimate
\begin{equation}\label{Gestimate}
\Bigg|\frac{\Gamma(\frac{1}{2} - \frac{1}{2}s)}{\Gamma(\frac{1}{2}s)}\Bigg| \leq \left(\frac{|1+s|}{2}\right)^{\frac{1}{2}-\sigma},
\end{equation}
for $-\frac{1}{2} \leq \sigma \leq \frac{1}{2}$ (see \cite[p.\ 197]{Rademacher}), and Backlund's estimate (\ref{Best}) show that
\begin{equation*}
|\zeta(it)|\leq \left(\frac{|1 + it|}{2\pi}\right)^{\frac{1}{2}}\log t, \quad\textrm{for } t\geq 50.
\end{equation*}
A computational check shows that
\begin{equation}\label{bound1}
|a(it)|\leq (2\pi)^{-\frac{1}{2}} |Q_{0} + it|^{\frac{3}{2}} \log|Q_{0} + it|,
\end{equation}
for all $t$ and for any $Q_{0}\geq 2$. It follows from Lemma \ref{lem1}, (\ref{schle}), (\ref{schle2}), (\ref{bound2}) and (\ref{bound1}) that
\begin{equation*}
\begin{split}
\frac{R_{3}}{N}\leq & (\log T + 10^{-5})[r(\tfrac{1}{2} + \eta)(1-2k_{2})(\cos\phi_{2} - \cos\phi_{3}) \\
& \qquad\qquad\qquad\quad+ (\phi_{3} - \phi_{2})\{2k_{2}(1+ \eta) + \tfrac{1}{2} - \eta\}]\\
&+ 2(\log\log T + 0.007)[(k_{3} - 1)r(\tfrac{1}{2} + \eta)(\cos\phi_{3} - \cos\phi_{2}) \\
& \qquad\qquad\qquad\qquad\qquad+ (\phi_{3} - \phi_{2})\{k_{3}(1 + \eta) - (\tfrac{1}{2} + \eta)\}]\\
&+ \log 2\pi [(\tfrac{1}{2} + \eta)(\phi_{3} - \phi_{2}) - r(\tfrac{1}{2} + \eta)(\cos\phi_{2} - \cos\phi_{3})]\\
& + 2\log k_{1}[(1+ \eta)(\phi_{3} - \phi_{2}) - r(\tfrac{1}{2} + \eta)(\cos\phi_{2} - \cos\phi_{3})].
\end{split}
\end{equation*}

\subsection{$R_{4}$}
(\ref{functe}), (\ref{Gestimate}), and the trivial estimate for $\zeta(s)$ show that
\begin{equation}\label{bound0}
|a(q + it)| \leq (2\pi)^{q-\frac{1}{2}} |Q_{0} + (q + it)|^{\frac{3}{2} - q} \zeta(1- q),
\end{equation}
for all $t$ and for all $Q_{0}\geq 2$, where $q = 1+ \eta - r(\frac{1}{2} + \eta)$.

Lemma \ref{lem1},  (\ref{schle}), (\ref{schle2}), (\ref{bound1}) and (\ref{bound0}) show that
\begin{equation*}
\begin{split}
\frac{R_{4}}{N}&\leq (\log T + 10^{-5})[(\tfrac{1}{2} - \eta)(\tfrac{\pi}{2} - \phi_{3}) + r(\tfrac{1}{2} + \eta)] \\
&+ (\log\log T + 0.007)\left[\frac{r(\tfrac{1}{2} +\eta)(\frac{\pi}{2} - \phi_{3} - \cos\phi_{3})}{r(\tfrac{1}{2} + \eta) - (1+ \eta)}\right]\\
& + \log 2\pi [(\tfrac{1}{2} + \eta)(\tfrac{\pi}{2} - \phi_{3}) - r(\tfrac{1}{2} + \eta)\cos\phi_{3}] \\
&+ \log \zeta(r(\tfrac{1}{2} + \eta) - \eta) \bigg[\frac{r(\tfrac{1}{2} + \eta) - (\tfrac{\pi}{2} - \phi_{3})(1+ \eta)}{r(\tfrac{1}{2} + \eta) - (1+ \eta)}\bigg].
\end{split}
\end{equation*}
One therefore has all of the results needed to bound $|S(t)|$. This produces an expression the inelegance of which prohibits its being inserted in this paper. The next section will provide some specific information.

\section{Specific values of $k_{1}, k_{2}$ and $k_{3}$}
Cheng and Graham \cite[Thm.\ 3]{Cheng} proved that
\begin{equation}\label{CGT3}
|\zeta(\tfrac{1}{2} + it)| \leq 1.457t^{1/6}\log t + 40.995t^{1/6} + 1.863\log t + 123.125,
\end{equation}
and that
\begin{equation}\label{CGT2}
|\zeta(\tfrac{1}{2} + it)| \leq 6t^{1/4} + 41.129,
\end{equation}
where both (\ref{CGT3}) and (\ref{CGT2}) are valid for $t\geq e$. They combine these results with a computational check to show that
\begin{equation}\label{CGTconc}
|\zeta(\tfrac{1}{2} + it)|\leq 3t^{1/6}\log t,
\end{equation}
for $t\geq e$. Actually, their proof enables one to write 2.76 in place of 3 in (\ref{CGTconc}). This can be improved by combining (\ref{CGT3}) not with (\ref{CGT2}) but with the estimate
\begin{equation*}\label{CGT2rep}
|\zeta(\tfrac{1}{2} + it)| \leq \frac{4}{(2\pi)^{\frac{1}{4}}} t^{1/4}, 
\end{equation*}
(see, e.g., \cite[Lemma 2]{Lehman}) which is valid for $t\geq 1$. This shows that
\begin{equation*}\label{Boundonhalf}
|\zeta(\tfrac{1}{2} +it)|\leq 2.38 t^{1/6}\log t,
\end{equation*}
for $t\geq e$. This, and a small computational check for $0\leq t \leq e$, shows that (\ref{bound2}) holds with $k_{1} = 2.38$, $k_{2}= \frac{1}{6}$, $k_{3} = 1$ and $Q_{0} \geq 1$.

One can now take $Q_{0} =2$, whence all of (\ref{bound4}), (\ref{bound3}), (\ref{bound2}), (\ref{bound1}), (\ref{bound0}) are satisfied.

\section{Conclusion}
Combining the above results shows that, when $T\geq T_{0}\geq 1,$
\begin{equation}\label{Sfinal}
|S(T)| \leq a \log T + b\log\log T + c,
\end{equation}
where 
\begin{equation*}
a= \frac{\phi_{1}\eta + (\phi_{2} - \frac{3\pi}{2})(\frac{1}{2} + \eta) + \phi_{3}(1 + \eta) + r(\tfrac{1}{2} + \eta)(\cos\phi_{1} + \cos\phi_{2} + \cos\phi_{3})}{6\pi \log r},
\end{equation*}
\begin{equation*}
b= \frac{-\phi_{1}+ \phi_{3}  + r(\tfrac{1}{2} + \eta)\left\{\frac{1-\cos\phi_{1}}{\eta} + \frac{\tfrac{\pi}{2} - \cos\phi_{3} - \phi_{3}}{r(\frac{1}{2} + \eta) -(1+ \eta)}\right\}}{2\pi \log r},
\end{equation*}
\begin{equation*}
\begin{split}
c&=  \frac{\log\frac{\zeta(1+ \eta)}{\zeta(2 + 2\eta)}}{2\log r} + \frac{1}{\pi}\log\zeta(\tfrac{1}{2} + \sqrt{2}(\eta + \tfrac{1}{2})) + \frac{\int_{-\pi/2}^{\pi/2} \log\zeta(1+ \eta + r(\tfrac{1}{2} + \eta)\cos\phi)\, d\phi}{4\pi\log r}\\
&+ \bigg\{\log\zeta(1+ \eta)\left[\phi_{1} + \frac{r(\frac{1}{2} + \eta)(\cos\phi_{1} - 1)}{\eta}\right] + (\tfrac{1}{2} + \eta)(\tfrac{\pi}{2} - \phi_{2} - r\cos\phi_{2})\log 2\pi \\
&-2\log k_{1} [r(\tfrac{1}{2} + \eta)(2\cos\phi_{2} - \cos\phi_{1} - \cos\phi_{3}) + \phi_{2} - \phi_{3} + \eta(2\phi_{2} - \phi_{1} - \phi_{3})]\\
&+ \left[\frac{(1+ \eta)(\frac{\pi}{2} - \phi_{3}) - r(\frac{1}{2} + \eta)\cos\phi_{3}}{1+ \eta - r(\frac{1}{2} + \eta)}\right]\log\zeta[r(\tfrac{1}{2} + \eta) - \eta]\bigg\}\bigg/(2\pi \log r) + 0.003.
\end{split}
\end{equation*}
Taking $\eta = 0.06$ and $r = 2.08$ proves Theorem \ref{Onenad} for $T\geq 6.8\cdot 10^{6}$. When $T< 6.8 \cdot 10^{6}$, Theorem \ref{Onenad} follows from (\ref{Odl}).

\section{Improvements}
Theorem \ref{Onenad} is improved instantly if one can provide better bounds for the growth of $|\zeta(\tfrac{1}{2} + it)|$ and $|\zeta(1 + it)|$. For the former one seeks an explicit bound on 
\begin{equation}\label{lastaa}
|\zeta(\tfrac{1}{2} + it)| \ll t^{\theta},
\end{equation}
for some $\theta \leq \frac{1}{6}$. The easiest approach seems to be to make explicit the result of Titchmarsh \cite[Thm.\ 5.18]{Titchmarsh} which has $\theta = \frac{27}{164}$ in (\ref{lastaa}). 
For the latter one could make explicit the estimate $\zeta(1 + it) = O(\log t/\log\log t)$, by following the arguments preparatory to proving Theorem 5.16 in \cite{Titchmarsh}. 

Both of these approaches are being investigated by the author.

\section*{Acknowledgements}
I wish to thank Yannick Saouter, Nuno Costa Pereira, Olivier Ramar\'{e} for their encouragement.
\bibliographystyle{plain}
\bibliography{themastercanada}

\end{document}